\newtheorem{fed}{Definition}[section]
\newtheorem{teo}[fed]{Theorem}
\newtheorem{lem}[fed]{Lemma}
\newtheorem{rem}[fed]{Remark}
\newtheorem{prop}[fed]{Proposition}
\newtheorem{defn}[fed]{Definition}
 \DeclareMathOperator{\RE}{Re}
\begin{document}
\title{Asymptotic behavior of global solutions of the $u_t=\Delta u + u^{p}$ \footnote{ AMS Subject Classification:
Primary 35B40, 35B35, 35K65; Secondary 35K55.} \footnote{ Key
words: Semilinear heat equation, asymptotic behavior, entropy
method, entropy functional, entropy production functional.} }

\author{ Oscar A. Barraza \quad Laura B. Langoni}

\maketitle \vskip-2cm
\ \\
\begin{center}
{\it Departamento de Matem\'atica, Facultad de Ciencias Exactas,
 Universidad Nacional de La Plata, C.C. 172, (1900) La Plata, Argentina.
 e-mail: oscar@mate.unlp.edu.ar}
\end{center}
\vskip3cm

%\dedicatory{}

%\commby{\ }

%%% ----------------------------------------------------------------------

\begin{abstract}

\noindent We study the asymptotic behavior of nonnegative
solutions of the semilinear parabolic problem
\[
\left\{
\begin{array}{l}
u_t=\Delta u + u^{p}, \;\; x\in\mathbb{R}^{N},\;\;t>0\\
u(0)=u_{0}, \;\; x\in\mathbb{R}^{N},\;\;t=0.
\end{array}
\right.
\]
It is known that  the nonnegative solution $u(t)$ of this problem
blows up in finite time for $1<p\leq 1+ 2/N$. Moreover, if $p> 1+
2/N$ and the norm of $u_{0}$ is small enough, the problem admits
global solution. In this work, we use the entropy method to obtain
the decay rate of the global solution $u(t)$.

\end{abstract}

\vfill \eject
%%% ----------------------------------------------------------------------

%%% ----------------------------------------------------------------------
\section{Introduction}

We considerer the semilinear parabolic problem
\begin{equation}\label{ec}
\left\{
\begin{array}{l}
u_t=\Delta u + u^{p}, \;\; x\in\mathbb{R}^{N},\;\;t>0\\
u(0)=u_{0}, \;\; x\in\mathbb{R}^{N},\;\;t=0,
\end{array}
\right.
\end{equation}
where $p>1$ and $u_{0}$ is nonnegative and nontrivial. The
interest in this article is to study the asymptotic behavior of
global in time solutions in order to obtain their decay rate.
Although this matter has been treated for other authors, we do it
using the entropy method. This method has been successfully
applied, for instead,  by J.A. Carrillo and G. Toscani
\cite{Carrillo-Toscani} for the asymptotic behavior of global
solutions of some Fokker-Planck type equations.

  In \cite{Fujita}, Fujita considered the evolution problem
(\ref{ec}) and proved the existence of a critical exponent $
p^{*}=1+\frac{2}{N}$, which is called the Fujita's exponent. This
exponent satisfies
\begin{itemize}
\item for $p>p^{*}$, if the norm of $u_{0}$ is small enough, there
exists a classical global in time positive solution of (\ref{ec})
which decays to zero when $t\rightarrow\infty$, in the other case
 the solution blows up in finite time;
 \item for
$1<p<p^{*}$ and any choice of the initial condition $u_{0}$, every
positive solution of  (\ref{ec}) blows up in finite time.
\end{itemize}
In the case $p=p^{*}$ Hayakawa \cite{Hayakawa} proved that every
positive solution of (\ref{ec}) has the same behavior as
$1<p<p^{*}$. This fact was proved by Kobayashi, Sirao and Tanaka
\cite{Kobayashi} too.

 The aim of this work is to obtain the decay rate
of the global solution $u(s)$ of the problem (\ref{ec}) when
$t\rightarrow\infty$. More specifically, we deduce that under
certain hypotheses, the $L^{2}$-distance of $u(s)$ decays with the
rate
\[\|u(t)\|_{L^{2}}\leq C\;
(t+1)^{-\frac{N}{4}}\;,\;\; t\geq t_{1}\;,\] for certain time
$t_{1}>0$, as well as the $L^{q}$-distance of $u(s)$
\[\|u(t)\|_{L^{q}}\sim
t^{-\frac{1}{p-1}-\frac{2}{q}\left(\frac{N}{4}-\frac{1}{p-1}\right)}.\]

 The organization of this work is the following. The section 2
contains the notation and some previous results which are
necessary for the next sections. In section 3 a short summary
about the entropy method steps is given.  In the section 4 we
obtain the exponential decay for the entropy production and for
the entropy.  The last section
 is devoted to deduce the asymptotic behavior of the mentioned global solution.

%%%----------------------------------------------------------------
\section{Preliminaries}\label{prelim}

 The idea is to transform the equation in problem (\ref{ec}) in order to obtain significant
 information on the asymptotic behavior of the global solutions. The change of
 variables utilized for Kavian \cite{Kavian} and Kawanago \cite{Kawanago} is here employed. That
 is,
\begin{eqnarray}\label{cv}
v( y, s)&=& (t+1)^{\frac{1}{p-1}} \;u( x, t),\nonumber\\
x=(t+1)^{1/2}y   \;\; &\mbox{and}& \;\; t=e^{s}-1.
\end{eqnarray}
Then, $v( y, s)$ results to be a solution of the problem
\begin{equation}\label{ecv}
\left\{
\begin{array}{l}
v_s=\Delta v +\frac{y}{2}.\nabla v + \frac{v}{p-1}+ v^{p}, \;\; y\in\mathbb{R}^{N},\;\;s>0\\
v(y,0)=u_{0}, \;\; y\in\mathbb{R}^{N}.
\end{array}
\right.
\end{equation}
Let us observe that problem (\ref{ecv}) has the same initial
condition as problem (\ref{ec}). Moreover, we work with the
following spaces
\[
L^{r}_{\rho}=\left\{f\; /\int_{\mathbb{R}^{N}}|f|^{r}\rho \;dy
<\infty\right\}\; ,\]
\[H^{1}_{\rho}=\left\{f \in L^{2}_{\rho} / \; \nabla f  \in L^{2}_{\rho}\right\}\;,\;\; H^{2}_{\rho}=\left\{f \in
H^{1}_{\rho} /  \;\nabla f  \in H^{1}_{\rho}\right\}\;,\]
\noindent where $ \rho(y)=\exp(|y|^{2}/4)$ and $r\geq 1$ is a
constant. Related to these spaces are
\[
(f,g)_{L^{2}_{\rho}}= \int_{\mathbb{R}^{N}}f g\;\rho \; dy\;, \;
\; \|f\|_{L^{2}_{\rho}}=(f,f)^{\frac{1}{2}}\;,\]
\[(f,g)_{H^{1}_{\rho}}= (f,g)_{L^{2}_{\rho}}+(\nabla f,\nabla g)_{L^{2}_{\rho}}\;,\]
\[\|f\|_{H^{1}_{\rho}}=\left[\|f\|_{L^{2}_{\rho}}^{2}+\|\nabla
f\|_{L^{2}_{\rho}}^{2}\right]^{\frac{1}{2}}\;.\] \noindent Observe
that the equation in (\ref{ecv}) has to be written in the
 shape
\[v_{s}=-Lv+\frac{v}{p-1}+v^{p}\;\;,\]
where $L$ is the self-adjoint operator given by
\[Lv= -\triangle v -\frac{y}{2}.\nabla v,\; \textrm{defined over} \; D(L):=H^{2}_{\rho}.\]
\noindent We know that this operator satisfies
\begin{itemize} \item $\lambda_{1}=\frac{N}{2}$ is the least eigenvalue of
$L$. Then, the following inequality holds
\begin{equation}\label{desigL}
\frac{N}{2}\;\|v\|_{L^{2}_{\rho}}\leq \int_{\mathbb{R}^{N}}|\nabla
v|^{2}\rho \;dy\;,
\end{equation}
 \item the operator $L$ has compact inverse.
\end{itemize}
As well, it is known that
$u_{\infty}=C(N,p)\;|x|^{-\frac{2}{p-1}}$ with
$C(N,p)=\left[\frac{2}{p-1}\left(N-\frac{2p}{p-1}\right)\right]^{\frac{1}{p-1}}$
is a singular equilibrium of (\ref{ec}), that is, $u_{\infty}$ is
a solution of the Lane-Emden equation
\[ \triangle u + u^{p}=0, \;\;
x\in\mathbb{R}^{N}\;\;u>0,\;\;N\geq 3,\] which arises in
astrophysics and Riemannian geometry. It is well-known that this
fact is possible only for those values of $p$ that verify $p\geq
\frac{N}{N-2}$, since Gidas and Spruck \cite{Gidas} proved that
there are not stationary solutions in any other cases. In 1993,
Wang \cite{Wang} proved that if $N\geq 3$, $p>\frac{N}{N-2}$ and
\[0\leq u_{0}(x)\leq\lambda\; u_{\infty},\]
where $0<\lambda<1$, then (\ref{ec}) has a unique global classical
solution $u$ with  $0\leq u \leq \lambda u_{\infty}$. It also
satisfies that
\[u(x,t)\leq[(\lambda^{1-p}-1)(p-1)\;t]^{-\frac{1}{p-1}}.
\]
This inequality, in terms of the problem (\ref{ecv}), can  be
expressed as
\begin{equation}\label{deswangv}
v(y,s)\leq
\frac{1}{[(\lambda^{1-p}-1)(p-1)]^{\frac{1}{p-1}}}\left[\frac{e^{s}}{e^{s}-1}\right]^{\frac{1}{p-1}}.
\end{equation}
>From now on, we assume that the Wang's theorem hypotheses are
satisfied. Moreover, the framework will be the set of global
solutions $v$ of (\ref{ecv}) such that $v \in X$, with
\[X=\left\{f\in H^{1}_{\rho}\cap L^{\infty}/ \quad f\geq0
\;\; \textrm{and} \;
\;\lim_{s\rightarrow\infty}\int_{{\mathbb{R}^{N}}}|\nabla
f(s)|^{2}\rho\; dy=0\right\}.\]

%--------------------------------------------------------------------------
\section{Entropy method}

We study the asymptotic behavior of the global in time solutions
of the problem (\ref{ecv}). For it, we use the already mentioned
entropy method. The essential application of this method will
consist in the following steps.
\begin{itemize}
\item Define a suitable \textbf{entropy functional
 $E(v(s))$} for the equation (\ref{ecv}) and study its properties.
  \item Compute the \textbf{entropy production}

\vspace{-0.8cm}
\[I(v(s))=\frac{d}{ds}E(v(s))\;.\]
\item Compute the derivative of entropy production and obtain a
differential equation of type

\vspace{-0.8cm}\[\frac{d}{ds}I(v(s)) = - \;C \;I(v(s))- R(s)\;,\]
for certain constant $C>0$ and certain function $R(s)$.
 \item Prove the properties of $R(s)$ that permit
to obtain an exponential decay of $I(v(s))$,

\vspace{-0.8cm}\[ I(v(s))\leq A \;e^{-Cs}\;.\] \item Obtain the
same decay rate for $E(v(s))$ from the previous items, more
specifically

\vspace{-0.8cm}\[ E(v(s))\leq B\; e^{-Cs}\;,\;\mbox{for}\;\;s\geq
s_{1}\;\textrm{and certain}\;s_{1}>0.\] \item Give a bound of
$\|v\|_{L^{2}_{\rho}}$ in terms of the entropy and entropy
production which permits to get conclusions on the decay of the
mentioned norm.
\end{itemize}

\medskip

The same entropy functional introduced by Kavian and Kawanago
(\cite{Kavian} and \cite{Kawanago} respectively) will be used in
the present article.

\begin{defn} For every $v\in H^{1}_{\rho}\cap L^{p+1}_{\rho}$
 the \textbf{ entropy functional} is defined by
\medskip
\[E(v)=\int_{\mathbb{R}^{N}}\left[\frac{1}{2}\;|\nabla v|^{2}-
\frac{1}{2(p-1)}\;v^{2}- \frac{1}{p+1}\;v^{p+1}\right]\rho\;
dy\;.\]
 \end{defn}

In order to obtain the decays announced above, some properties of
this functional are needed. These properties are summarized in the
next proposition. The first of them was proved in \cite{Kavian}.

\medskip

\begin{prop}\label{E}
Let $u_{0}\;\in H^{1}_{\rho}\cap L^{\infty}, u_{0}\geq 0$,
$E(u_{0})< \infty $ and $v=v(y,s)$ the global solution of
(\ref{ecv}), $v\in X$. Then
\begin{enumerate}
\item if there exists $s_{0}\geq 0$ such that $E(v(s_{0}))\leq 0$
and $v(s_{0})\neq 0$, $v$ blows up in finite time; \item
$\frac{d}{ds}E(v(s))=-I(v(s))$ where
$I(v(s))=\int_{\mathbb{R}^{N}} v^{2}_{s}\rho\;dy$; %,funci\'{o}n a
%la que llamamos \textbf{Producci\'{o}n de Entrop\'{i}a}.
\item there exists $\displaystyle{\;
M:=\lim_{s\rightarrow\infty}E(v(s))}$ and, moreover, $M=0$.
\end{enumerate}
\end{prop}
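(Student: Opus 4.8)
The plan is to establish the three items in order, since each feeds the next. For item (2), I would differentiate $E(v(s))$ along the flow: writing $\frac{d}{ds}E(v(s)) = \int_{\mathbb{R}^N}\big[\nabla v\cdot\nabla v_s - \frac{1}{p-1}vv_s - v^p v_s\big]\rho\,dy$, and then integrating the first term by parts against the weight $\rho$. The key identity is that $\int \nabla v\cdot\nabla\varphi\,\rho\,dy = \int (Lv)\varphi\,\rho\,dy = \int(-\Delta v - \frac{y}{2}\cdot\nabla v)\varphi\,\rho\,dy$, which is exactly the self-adjointness of $L$ on $L^2_\rho$ recorded in Section \ref{prelim} (note $\nabla\rho = \frac{y}{2}\rho$). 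Hence $\frac{d}{ds}E(v(s)) = \int_{\mathbb{R}^N}\big(-\Delta v - \frac{y}{2}\cdot\nabla v - \frac{v}{p-1} - v^p\big)v_s\,\rho\,dy = -\int_{\mathbb{R}^N} v_s^2\,\rho\,dy = -I(v(s))$, using the equation $v_s = \Delta v + \frac{y}{2}\cdot\nabla v + \frac{v}{p-1} + v^p$ from (\ref{ecv}). One must check the integrations by parts are legitimate, i.e. there are no boundary terms at infinity; this is where the weight $\rho$ and the regularity $v(s)\in H^2_\rho$ (plus $v\in L^\infty$, via Wang's bound (\ref{deswangv})) are used, and I would cite Kavian \cite{Kavian} and Kawanago \cite{Kawanago} for the functional-analytic justification.

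For item (1), suppose $E(v(s_0))\le 0$ with $v(s_0)\ne 0$. By item (2), $E(v(s))$ is nonincreasing in $s$, so $E(v(s))\le E(v(s_0))\le 0$ for all $s\ge s_0$. The standard concavity/Levine-type argument then applies: one considers $\int v^2\rho\,dy$ and shows, using the differential inequality relating $\frac{d}{ds}\int v^2\rho\,dy$ to $-E(v(s))$ and the sign of $E$, that $\int v^2\rho\,dy$ cannot stay finite for all time — equivalently, in the original variables $u$ blows up in finite time. Since this is the classical Kaplan/Levine blow-up mechanism adapted to the $\rho$-weighted setting and is already present in \cite{Kavian}, I would carry it out briefly rather than in full detail, the main point being that the monotonicity from item (2) propagates the sign hypothesis forward.

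For item (3): by hypothesis $v\in X$ is a \emph{global} solution, so by item (1) we must have $E(v(s))>0$ for all $s\ge 0$ (otherwise blow-up). Thus $E(v(s))$ is bounded below by $0$ and nonincreasing, hence $M:=\lim_{s\to\infty}E(v(s))$ exists and $M\ge 0$. To show $M=0$, I would use the definition of $X$: as $s\to\infty$, $\int_{\mathbb{R}^N}|\nabla v(s)|^2\rho\,dy\to 0$. Combined with the spectral gap inequality (\ref{desigL}), $\frac{N}{2}\|v(s)\|_{L^2_\rho}^2 \le \int|\nabla v(s)|^2\rho\,dy \to 0$, so $\|v(s)\|_{L^2_\rho}\to 0$ as well; and via the $L^\infty$ bound (\ref{deswangv}) together with interpolation, $\|v(s)\|_{L^{p+1}_\rho}\to 0$. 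Therefore every one of the three terms in $E(v(s))=\int[\frac12|\nabla v|^2 - \frac{1}{2(p-1)}v^2 - \frac{1}{p+1}v^{p+1}]\rho\,dy$ tends to $0$, forcing $M=0$. The main obstacle is the rigorous handling of the $L^{p+1}_\rho$ term: one needs that $v^{p+1}\rho$ is uniformly integrable so that the $L^2_\rho$-decay plus the (time-dependent but eventually bounded) $L^\infty$ bound from (\ref{deswangv}) actually forces $\int v^{p+1}\rho\,dy\to 0$; I would control this by writing $\int v^{p+1}\rho\,dy \le \|v(s)\|_{L^\infty}^{p-1}\|v(s)\|_{L^2_\rho}^2$ and noting $\|v(s)\|_{L^\infty}$ stays bounded for large $s$ by (\ref{deswangv}), so the product vanishes.
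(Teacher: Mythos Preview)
Your proposal is correct and follows essentially the same route as the paper: item (2) via differentiating $E$ along the flow and integrating by parts using the self-adjointness of $L$; item (1) deferred to Kavian \cite{Kavian}; and item (3) by combining the monotonicity from (2) with the lower bound $E\ge 0$ from (1), then showing each of the three terms of $E(v(s))$ vanishes as $s\to\infty$ using $v\in X$, the spectral inequality (\ref{desigL}), and the pointwise bound (\ref{deswangv}) to control $\int v^{p+1}\rho\,dy$ by $\|v\|_{L^\infty}^{p-1}\|v\|_{L^2_\rho}^2$. The paper's argument for the third term is exactly your interpolation $v^{p+1}\le \|v\|_{L^\infty}^{p-1}v^2$ with (\ref{deswangv}) supplying the uniform-in-$s$ bound on $\|v\|_{L^\infty}^{p-1}$.
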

\bigskip

\begin{defn}
 The functional $I(v(s))$ of the proposition \ref{E} is called
\textbf{entropy production}.
\end{defn}
\begin{proof}
 For the first property, the reader is refered to \cite{Kavian}. The second one is deduced by derivating
  $E(v(s))$ with respect to $s$, integrating by parts and
 keeping in mind that $v$ is solution of (\ref{ecv}).  In order to prove the third property,
 first observe that owing to the second one {\it 2} we have that
  $E(v(s))$ is non increasing. As $0 \leq E(v(s))\leq
E(v(0))< \infty $, the existence of the limit is warranted.
Therefore, we must only see that the limit is equal to zero. For
this issue, we observe that the first term in the expression of
$E(v(s))$ tends to zero when $s\rightarrow\infty$ since $v\in X$.
The second term of that expression goes to $0$ too when
$s\rightarrow\infty$ due to the inequality (\ref{desigL}). To see
 the behavior of the third term of the entropy functional we use the
inequality (\ref{deswangv}) and obtain
\begin{eqnarray*}
0  & \leq & \int_{\mathbb{R}^{N}} \frac{1}{p+1}v^{p+1}\rho\; dy \\
& \leq &
\left[\frac{e^{s}}{e^{s}-1}\right]\frac{1}{(\lambda^{1-p}-1)(p^{2}-1)}\int_{\mathbb{R}^{N}}v^{2}\rho
\; dy .
\end{eqnarray*}
The last expression can be bounded for large values of $s$ as
following
\begin{eqnarray*}
 \int_{\mathbb{R}^{N}} \frac{1}{p+1}v^{p+1}\rho\; dy  \leq
 C \frac{1}{(\lambda^{1-p}-1)(p^{2}-1)}\int_{\mathbb{R}^{N}}v^{2}\rho\; dy
,
\end{eqnarray*}
where $C$ is a positive constant. Taking into account the last
inequality, we get that the third term of $E(v(s))$ tends to zero
when $s\rightarrow\infty$.
\end{proof}
 Now, we want to prove the decay of the entropy production and, as a result, the decay of the
 entropy functional. It will be made in the next section.

%------------------------------------------------------------------------------------
\section{Decay of the entropy functional}

  The computation of $\frac{d I(v(s))}{ds}$ is needed to obtain the decay of the entropy
  production. We write this derivative in a convenient way
  using that $v$ is the solution of the equation (\ref{ecv}). That is,
 \begin{eqnarray}\label{igdI}
 \frac{d}{ds}I(v(s)) &=& \int_{\mathbb{R}^{N}}2\; v_{s}v_{ss}\rho\;
 dy\nonumber\\ &=& \frac{2}{p-1}\;I(v(s))-2\; (Lv_{s},v_{s})+2p
 \int_{\mathbb{R}^{N}}v^{p-1}v_{s}^{2}\rho \;dy\nonumber\\ &=& -2\gamma
 \;I(v(s))-2\;R(s),
\end{eqnarray}
where $\gamma$ is a positive constant and $R(s)$ is an appropriate
function. Both $\gamma$ and $R(s)$ are defined by
\begin{eqnarray}\label{R}
\gamma&=&\frac{N}{2}- \frac{1}{p-1}>0,\nonumber\\
R(s)&=&(Lv_{s},v_{s})_{L^{2}_{\rho}} - \frac{N}{2}
\;\|v_{s}\|^{2}_{L^{2}_{\rho}}- p
\int_{\mathbb{R}^{N}}v^{p-1}v^{2}_{s}\rho\; dy\;.
\end{eqnarray}
 We need to have more information about $R(s)$ that allows to obtain conclusions about the
  decay of the entropy production. This is the aim of the next three lemmas.
\begin{lem}\label{desiginteg1}
 Let $v\in
H^{2}_{\rho}$ and $\Omega\subset\mathbb{R}^{N}$ an open set. Then
\[\int_{\Omega}(vLv-\frac{N}{2}\;v^{2})\rho\;dy \geq 0\;.\]
\end{lem}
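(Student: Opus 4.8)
The plan is to reduce the assertion to a pointwise algebraic identity and then apply the divergence theorem. The only property of the weight that enters is $\nabla\rho=\tfrac{1}{2}\,y\,\rho$, which gives, for $v\in H^{2}_{\rho}$,
\[
\nabla\cdot(\rho\nabla v)=\rho\Delta v+\tfrac{1}{2}\,\rho\,y\cdot\nabla v=-\rho\,Lv .
\]
Completing the square in $\nabla v+\tfrac{1}{2}\,v\,y=\rho^{-1}\nabla(\rho v)$ then yields the pointwise identity
\[
\Bigl(vLv-\tfrac{N}{2}\,v^{2}\Bigr)\rho=\rho\,\Bigl|\nabla v+\tfrac{1}{2}\,v\,y\Bigr|^{2}-\nabla\cdot\bigl(v\,\nabla(\rho v)\bigr) ,
\]
which one verifies directly by expanding $\nabla\cdot(v\nabla(\rho v))=\nabla v\cdot\nabla(\rho v)+v\,\Delta(\rho v)$ and using $\nabla(\rho v)=\rho\nabla v+\tfrac{1}{2}\,\rho\,v\,y$ together with $\Delta(\rho v)=-\rho Lv+\tfrac{N}{2}\rho v+\tfrac{1}{2}\,\rho\,y\cdot\nabla v+\tfrac{|y|^{2}}{4}\rho v$, and collecting terms. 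This identity carries the whole content of the lemma.

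Integrating over $\Omega$, the first term on the right-hand side is pointwise nonnegative, so the inequality reduces to the sign of $\displaystyle\int_{\Omega}\nabla\cdot\bigl(v\,\nabla(\rho v)\bigr)\,dy=\int_{\partial\Omega}v\,\partial_{\nu}(\rho v)\,dS$. For $\Omega=\mathbb{R}^{N}$ there is no boundary: the decay built into $H^{2}_{\rho}$ makes the divergence theorem applicable with vanishing flux at infinity, and we obtain $\int_{\mathbb{R}^{N}}(vLv-\tfrac{N}{2}v^{2})\rho\,dy=\int_{\mathbb{R}^{N}}\rho\,|\nabla v+\tfrac{1}{2} v\,y|^{2}\,dy\ge 0$. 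Equivalently, this is nothing but inequality (\ref{desigL}) — the fact that $\lambda_{1}=N/2$ is the bottom of the spectrum of $L$ — rewritten through $\int_{\mathbb{R}^{N}}vLv\,\rho\,dy=\int_{\mathbb{R}^{N}}|\nabla v|^{2}\rho\,dy$; the square form only makes the sign manifest.

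The delicate point, and the one I expect to be the real work, is the boundary term $\int_{\partial\Omega}v\,\partial_{\nu}(\rho v)\,dS$ for a proper open subset $\Omega$, since the square identity alone does not pin down its sign for an arbitrary $\Omega$. It vanishes when $v=0$ on $\partial\Omega$ (in particular when $v$ is supported in $\Omega$), and it enters with the favourable sign when $\Omega$ is a super-level set $\{\rho v>c\}$ with $c\ge 0$: there $\nu=-\nabla(\rho v)/|\nabla(\rho v)|$ and $v=c/\rho\ge 0$ on $\partial\Omega$, so $v\,\partial_{\nu}(\rho v)=-v\,|\nabla(\rho v)|\le 0$, whence $\int_{\Omega}(vLv-\tfrac{N}{2}v^{2})\rho\,dy\ge\int_{\Omega}\rho\,|\nabla v+\tfrac{1}{2} v\,y|^{2}\,dy\ge 0$. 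Thus the argument should isolate whichever of these structural features the set $\Omega$ actually carries in the intended application, after which a routine approximation of $\Omega$ by domains with smooth boundary (and of $v$ by smooth functions in $H^{2}_{\rho}$) legitimizes the integration by parts.
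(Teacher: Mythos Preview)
Your pointwise identity is correct, and for $\Omega=\mathbb{R}^{N}$ it cleanly recovers the spectral inequality~(\ref{desigL}). The gap is exactly where you locate it: for a general open $\Omega$ the boundary flux $\int_{\partial\Omega}v\,\partial_{\nu}(\rho v)\,dS$ has no preferred sign, and the special cases you treat ($v$ vanishing on $\partial\Omega$, or $\Omega$ a super-level set of $\rho v$) do not cover an arbitrary open set. But the lemma is stated for \emph{every} open $\Omega$ precisely so that Lemma~\ref{Gn} can then promote the conclusion to the pointwise a.e.\ inequality $vLv\ge\tfrac{N}{2}v^{2}$ used inside Lemma~\ref{propk}; deferring to ``whichever structural features $\Omega$ carries in the application'' therefore leaves the statement, as written, unproved.

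The paper's route is different and avoids integrating by parts on $\Omega$ altogether. It writes $\int_{\Omega}vLv\,\rho\,dy=\int_{\mathbb{R}^{N}}(v\mathcal{X}_{\Omega})\,(Lv)\mathcal{X}_{\Omega}\,\rho\,dy$, asserts $(Lv)\mathcal{X}_{\Omega}=L(v\mathcal{X}_{\Omega})$ almost everywhere, and then applies the global inequality~(\ref{desigL}) directly to the truncated function $v\mathcal{X}_{\Omega}$, obtaining $\int_{\mathbb{R}^{N}}(v\mathcal{X}_{\Omega})L(v\mathcal{X}_{\Omega})\,\rho\,dy\ge\tfrac{N}{2}\|v\mathcal{X}_{\Omega}\|_{L^{2}_{\rho}}^{2}$. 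Thus the localized estimate is reduced to the known global one, and no boundary term ever appears. The entire weight of that argument rests on the commutation step $(Lv)\mathcal{X}_{\Omega}=L(v\mathcal{X}_{\Omega})$ and on applying~(\ref{desigL}) to $v\mathcal{X}_{\Omega}$; since $L$ is second order and $\mathcal{X}_{\Omega}$ is discontinuous across $\partial\Omega$, both of these steps are themselves delicate --- so your instinct that the passage to general $\Omega$ is ``the real work'' is well placed, and it is exactly the point where the two approaches diverge.
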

\begin{proof} We observe that, if  $\mathcal{X}_{\Omega}$ is the characteristic function of the set $\Omega$,
then
\begin{eqnarray*}
\int_{\Omega}v Lv \;\rho\;dy&=&\int_{\mathbb{R}^{N}}(vLv)
\;\mathcal{X}_{\Omega}\;\rho\;dy\\&=&\int_{\mathbb{R}^{N}}v\;\mathcal{X}_{\Omega}(Lv)
\;\mathcal{X}_{\Omega}\;\rho\;dy\;.\nonumber\end{eqnarray*}Owing
to $(Lv) \;\mathcal{X}_{\Omega}=L(v \;\mathcal{X}_{\Omega})$
almost everywhere, we have that
\begin{eqnarray}\label{desigcaract}
\int_{\Omega}v Lv
\;\rho\;dy&=&\int_{\mathbb{R}^{N}}v\;\mathcal{X}_{\Omega}L(v
\;\mathcal{X}_{\Omega})\;\rho\;dy\nonumber\\&\geq& \frac{N}{2}\;
\int_{\mathbb{R}^{N}}(v\;\mathcal{X}_{\Omega})^{2}\;\rho\;dy\\&=&
\frac{N}{2}\;
\int_{\mathbb{R}^{N}}v^{2}\;\mathcal{X}_{\Omega}\;\rho\;dy
\nonumber\\&=&\frac{N}{2}\; \int_{\Omega}v^{2}\;\rho\;dy,\nonumber
\end{eqnarray}
where the inequality in (\ref{desigcaract}) is a direct
consequence of (\ref{desigL}). From the last computation the
lemma's statement is established.
\end{proof}
\begin{lem}\label{Gn}
Let $f\in L^{1}(\mathbb{R}^{N})$ such that $\int_{\Omega}f\;dy\geq
0$ for every open set $\Omega\subset\mathbb{R}^{N}$. Then,
 $f\geq 0$ almost everywhere in $\mathbb{R}^{N}$.
\end{lem}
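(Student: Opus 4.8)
The plan is to argue by contradiction. Suppose the conclusion fails, so that the set $N_{-}=\{y\in\mathbb{R}^{N}:f(y)<0\}$ has positive Lebesgue measure; I will then exhibit an open set $\Omega$ with $\int_{\Omega}f\,dy<0$, contradicting the hypothesis.

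The cleanest route uses the Lebesgue differentiation theorem. Since $f\in L^{1}(\mathbb{R}^{N})$, almost every point of $\mathbb{R}^{N}$ is a Lebesgue point of $f$, i.e.
\[
\lim_{r\to 0^{+}}\frac{1}{|B(y,r)|}\int_{B(y,r)}f(z)\,dz=f(y),
\]
where $B(y,r)$ denotes the open ball of radius $r$ about $y$. Because $|N_{-}|>0$, I can pick a point $y_{0}\in N_{-}$ at which this convergence holds, so that $f(y_{0})<0$. Then, by the definition of the limit, for all sufficiently small $r>0$ one has $\frac{1}{|B(y_{0},r)|}\int_{B(y_{0},r)}f\,dz<\tfrac{1}{2}f(y_{0})<0$, hence $\int_{B(y_{0},r)}f\,dz<0$. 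Since $B(y_{0},r)$ is an open subset of $\mathbb{R}^{N}$, this is the desired contradiction, and therefore $f\geq 0$ almost everywhere.

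If one prefers to avoid the differentiation theorem, an essentially equivalent argument runs as follows. Write $N_{-}=\bigcup_{n\geq 1}A_{n}$ with $A_{n}=\{f\leq -1/n\}$; if $|N_{-}|>0$ then $|A_{n_{0}}|>0$ for some $n_{0}$, and $|A_{n_{0}}|<\infty$ since $f\in L^{1}$. By outer regularity of Lebesgue measure choose an open set $\Omega\supset A_{n_{0}}$ with $|\Omega\setminus A_{n_{0}}|$ as small as desired, and by absolute continuity of the set function $E\mapsto\int_{E}|f|\,dy$ arrange that $\int_{\Omega\setminus A_{n_{0}}}|f|\,dy<\tfrac{1}{2n_{0}}|A_{n_{0}}|$. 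Then
\[
\int_{\Omega}f\,dy=\int_{A_{n_{0}}}f\,dy+\int_{\Omega\setminus A_{n_{0}}}f\,dy\leq -\frac{1}{n_{0}}|A_{n_{0}}|+\frac{1}{2n_{0}}|A_{n_{0}}|<0,
\]
again contradicting the hypothesis.

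There is no serious obstacle here; the only point requiring a little care is ensuring that the open set tested against genuinely inherits the negativity of $f$, which is exactly what the Lebesgue-point property (first approach), or outer regularity combined with absolute continuity of the integral (second approach), provides.
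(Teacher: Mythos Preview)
Your proposal is correct. The second argument you give is essentially the paper's own proof: the paper takes the full set $\Omega=\{f<0\}$, notes $\alpha=\int_{\Omega}f<0$, chooses open sets $G_{n}\supset\Omega$ with $m(G_{n}\setminus\Omega)<1/n$, and uses dominated convergence on $\varphi_{n}=f\,\mathcal{X}_{G_{n}\setminus\Omega}$ to make $\int_{G_{n}\setminus\Omega}f$ small enough that $\int_{G_{n}}f<\alpha/2<0$. Your version with the level sets $A_{n_{0}}=\{f\le -1/n_{0}\}$ and absolute continuity of $E\mapsto\int_{E}|f|$ is the same outer-regularity idea, just packaged slightly differently (and it sidesteps the sequence and the dominated-convergence step).

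Your first argument via the Lebesgue differentiation theorem is genuinely different from the paper's route and is shorter: it produces directly an open \emph{ball} on which the integral is negative, without any approximation of the negativity set by open sets. The trade-off is that it invokes a somewhat heavier theorem, whereas the paper's argument (and your second one) uses only outer regularity of Lebesgue measure and elementary integration facts. Both approaches are standard and fully rigorous.
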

\begin{proof}
We suppose that $f$ isn't nonnegative a.e., then there exists a
measurable set $\Omega$ such that $m(\Omega)>0$ and $f<0$ in
$\Omega$. Here, $m(\Omega)$ denotes the Lebesgue measure in
$\mathbb{R}^{N}$ of the set $\Omega$. Then,
\[\alpha=\int_{\Omega}f\;dy<0.\]For each $n\in\mathbb{N}$, there exists
an open set $G_{n}\subset \mathbb{R}^{N}$ such that $\Omega\subset
G_{n}$ and $m(G_{n}-\Omega)< \frac{1}{n}$. We can choose those
open sets $G_{n}$ in such a way that the sequence $(G_{n})_{n}$ is
increasing with the inclusion. Then, we observe that
\begin{eqnarray}\label{desigalfa}
\int_{G_{n}}f\;dy&=&\alpha +
\int_{G_{n}-\Omega}f\;dy\nonumber\\&=&\alpha +
\int_{\mathbb{R}^{N}} \varphi_{n}\;dy
\end{eqnarray}
where $\varphi_{n}=\mathcal{X}_{(G_{n}-\Omega)} f $. It is quite
clear that the functions $\varphi_{n}$ are integrable functions,
they satisfy $|\varphi_{n}|\leq |f|$ and, moreover, they verify
that $\varphi_{n}\rightarrow 0$ when $n\rightarrow \infty$ a.e.
Then, owing to dominated convergence theorem, it results that
\begin{eqnarray*}\lim_{n\rightarrow\infty} \int_{\mathbb{R}^{N}} \varphi_{n}\;dy =
0.
\end{eqnarray*}
Thanks to the last equality we deduce that there exists a natural
$N_{0}$, such that for every $n\geq N_{0}$

\vspace{-1cm}
\begin{eqnarray}
\left|\int_{\mathbb{R}^{N}} \varphi_{n}\;dy\;\right|
<\frac{|\alpha|}{2}\;.
\end{eqnarray}
Using this inequality in (\ref{desigalfa}), we see that it
verifies
\begin{eqnarray*}
\int_{G_{n}}f\;dy &\leq &\alpha +
\left|\int_{\mathbb{R}^{N}}\varphi_{n}\;dy\right|\\&< &  \alpha +
\frac{|\alpha|}{2} =\frac{\alpha}{2}\;,
\end{eqnarray*}
for every $n\geq N_{0}$. That is, the integral of the function $f$
on the open sets $G_{n}$, for $n\geq N_{0}$, is negative. This
fact contradicts the lemma's hypothesis.
\end{proof}

The next lemma provides a bound of $R(s)$ in terms of a new
nonnegative function with exponential decay.
\begin{lem}\label{propk}
     Under the same hypotheses in the Wang's theorem and if, moreover,
      $\lambda<\left[\frac{3p-1}{\gamma(p-1)^{2}}+1\right]^{\frac{1}{1-p}}$, then the function
      $R(s)$ defined by (\ref{R}) satisfies $R(s)\geq -
\frac{1}{2}\;K(s)$ for certain
 function $K(s)$ that verifies
\begin{enumerate}
\item $K(s)\geq 0$,
 \item there exists a constant $a>0$ and a time $s_{1}> 0$,  which depend on $p$, $N$ and $\lambda$,
 such that \[ K(s)\leq
K(s_{1})\;e^{-(2\gamma+a)s},\quad\mbox{for}\quad s \geq
s_{1},\quad and\]  \item $\int^{\infty}_{0}e^{2\gamma s}K(s)\;ds
\leq C$, for a suitable positive constant $C$.
\end{enumerate}
\end{lem}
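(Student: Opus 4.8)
The plan is to split (\ref{R}) into a manifestly nonnegative part plus the term that will become $-\tfrac12 K(s)$. First I would apply Lemma \ref{desiginteg1} with $\Omega=\mathbb{R}^{N}$ to $v_{s}(\cdot,s)$, which belongs to $H^{2}_{\rho}$ by the regularity of the global solution $v\in X$; this gives $(Lv_{s},v_{s})_{L^{2}_{\rho}}-\tfrac{N}{2}\|v_{s}\|^{2}_{L^{2}_{\rho}}\ge 0$, so that from (\ref{R})
\[
R(s)\ \ge\ -\,p\int_{\mathbb{R}^{N}}v^{p-1}v_{s}^{2}\,\rho\,dy\ =:\ -\tfrac12 K(s),\qquad K(s):=2p\int_{\mathbb{R}^{N}}v^{p-1}v_{s}^{2}\,\rho\,dy .
\]
Since $v\ge 0$, the first assertion $K(s)\ge0$ is immediate, and the whole problem is reduced to showing that $\int_{\mathbb{R}^{N}}v^{p-1}v_{s}^{2}\rho\,dy$ decays at a rate strictly faster than $e^{-2\gamma s}$ --- this last point being, in fact, the delicate one, since by the lower bound $(v(s),e^{-|y|^{2}/4})_{L^{2}_{\rho}}\ge e^{-\gamma s}\int u_{0}$ the entropy production itself only decays like $e^{-2\gamma s}$, so the extra decay must come from the factor $v^{p-1}$.

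For this I would combine the two a priori bounds furnished by Wang's theorem. The pointwise bound (\ref{deswangv}) gives, for $s>0$, $K(s)\le\beta(s)\,I(v(s))$ with $\beta(s)=\dfrac{2p}{(\lambda^{1-p}-1)(p-1)}\dfrac{e^{s}}{e^{s}-1}$, decreasing to $\beta_{\infty}=2p\,[(\lambda^{1-p}-1)(p-1)]^{-1}$; inserting the nonnegativity above into (\ref{igdI}) yields the differential inequality $\tfrac{d}{ds}I(v(s))\le(-2\gamma+\beta(s))\,I(v(s))$. The smallness hypothesis on $\lambda$ ensures $\beta_{\infty}<2\gamma$, so fixing $s_{1}$ with $\beta(s_{1})<2\gamma$ and integrating gives a first, crude bound $I(v(s))\le c\,e^{-\mu_{0}s}$ for $s\ge s_{1}$, $\mu_{0}=2\gamma-\beta(s_{1})>0$; hence $E(v(s))=\int_{s}^{\infty}I(v(\tau))\,d\tau\le c\,e^{-\mu_{0}s}$ and, using the smallness of $\lambda$ again in the expression of $E$, $\|v(\cdot,s)\|_{L^{2}_{\rho}}\le c\,e^{-\mu_{0}s/2}$. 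Next I would use $0\le v(y,s)\le\lambda u_{\infty}(y)=\lambda C(N,p)|y|^{-2/(p-1)}$, which together with parabolic smoothing for (\ref{ecv}) upgrades this weighted $L^{2}$-decay to a uniform decay $\|v(\cdot,s)\|_{L^{\infty}}\le c\,e^{-\theta s}$ with $\theta>0$; then $K(s)\le 2p\|v(\cdot,s)\|_{L^{\infty}}^{p-1}I(v(s))$ carries the strictly larger exponent $\mu_{0}+(p-1)\theta$. A short bootstrap --- in which the precise shape of the hypothesis $\lambda^{1-p}-1>\tfrac{3p-1}{\gamma(p-1)^{2}}$ is exactly what makes the exponents line up --- then gives $\int_{s_{1}}^{\infty}e^{2\gamma s}K(s)\,ds<\infty$, hence via (\ref{igdI}) the sharp rate $I(v(s))\le A\,e^{-2\gamma s}$, and finally $K(s)\le K(s_{1})e^{-(2\gamma+a)s}$ for $s\ge s_{1}$ with $a=(p-1)\theta$, which is the second assertion. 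For the third assertion it remains to bound $\int_{0}^{s_{1}}e^{2\gamma s}K(s)\,ds$, where $e^{s}/(e^{s}-1)$ is unbounded; there I would discard (\ref{deswangv}) in favour of $v\le\lambda u_{\infty}$ together with the weighted Hardy inequality $\int_{\mathbb{R}^{N}}v_{s}^{2}|y|^{-2}\rho\,dy\le\tfrac{4}{(N-2)^{2}}(Lv_{s},v_{s})_{L^{2}_{\rho}}$, which bounds $K(s)$ by a constant times $\|\nabla v_{s}(\cdot,s)\|^{2}_{L^{2}_{\rho}}$, a quantity locally bounded in $s$ by regularity; this closes the third assertion.

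The main obstacle is precisely this circularity between the three conclusions: the decay of $K$ demanded in the second and third assertions is controlled by the decay of $I(v(s))$, while in the main line of the paper $I(v(s))\le Ae^{-2\gamma s}$ is obtained only once the third assertion is available. Breaking the loop is where the two bounds on $v$ play complementary roles --- (\ref{deswangv}) to start the iteration and to secure $\beta_{\infty}<2\gamma$, and $v\le\lambda u_{\infty}$ (with the Hardy inequality) both to supply the extra decay factor $\|v\|_{L^{\infty}}^{p-1}$ and to cover $s$ near $0$ --- and the quantitative smallness of $\lambda$ in the hypothesis is exactly the margin that bookkeeping consumes; the individual estimates themselves are routine.
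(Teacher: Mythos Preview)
Your definition $K(s)=2p\int_{\mathbb{R}^N} v^{p-1}v_s^2\,\rho\,dy$ and the bound $R(s)\ge-\tfrac12 K(s)$ coincide with the paper's. The decay argument, however, is entirely different, and yours contains a real gap.

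The paper does not go through $I(v(s))$ at all. It differentiates $K$ directly: using the equation to replace $v_{ss}$, one finds
\[
\frac{dK}{ds}=\Bigl(1+\tfrac{2}{p-1}\Bigr)K+2p(3p-1)\!\int v^{2p-2}v_s^{2}\rho\,dy
-2p(p-1)\!\int v^{p-2}v_s^{2}\,Lv\,\rho\,dy
-4p\!\int v^{p-1}v_s\,Lv_s\,\rho\,dy.
\]
The point of Lemmas~\ref{desiginteg1} and~\ref{Gn} --- which you invoke only in the global form $\Omega=\mathbb{R}^N$ --- is that together they yield the \emph{pointwise} inequalities $vLv\ge\tfrac{N}{2}v^2$ and $v_sLv_s\ge\tfrac{N}{2}v_s^2$ a.e.; these may then be multiplied by the nonnegative weights $v^{p-3}v_s^{2}$ and $v^{p-1}$ and integrated, so that each of the last two integrals is bounded below by a multiple of $K$. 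Bounding the remaining integral with (\ref{deswangv}) gives a closed differential inequality
\[
\frac{dK}{ds}\le-\bigl(2\gamma+\mu(s)\bigr)K,\qquad
\mu(s)=(p-1)\gamma-\frac{3p-1}{(\lambda^{1-p}-1)(p-1)}\,\frac{e^{s}}{e^{s}-1},
\]
and the hypothesis on $\lambda$ is precisely $\mu(\infty)>0$; since $\tfrac{e^{s}}{e^{s}-1}\downarrow 1$, one gets $\mu(s)\ge a>0$ for $s\ge s_1$, and Gronwall delivers part~2 without ever touching $I$, $E$, $\|v\|_{L^2_\rho}$ or $\|v\|_{L^\infty}$. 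There is no circularity to break, and part~3 follows at once from part~2.

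Your route instead bounds $K\le\beta(s)\,I$ and bootstraps $I\to E\to\|v\|_{L^2_\rho}\to\|v\|_{L^\infty}\to K$. The very first step already fails: you assert that the hypothesis on $\lambda$ forces $\beta_\infty=\dfrac{2p}{(\lambda^{1-p}-1)(p-1)}<2\gamma$, but at the threshold $\lambda^{1-p}-1=\dfrac{3p-1}{\gamma(p-1)^2}$ one has $\beta_\infty=\dfrac{2p(p-1)\gamma}{3p-1}$, and this is $<2\gamma$ only when $p(p-1)<3p-1$, i.e.\ $p<2+\sqrt{3}\approx 3.73$. For $p\ge 2+\sqrt{3}$ --- and the lemma is stated for all $p>\tfrac{N}{N-2}$, so such $p$ occur --- the inequality $\tfrac{d}{ds}I\le(-2\gamma+\beta(s))I$ gives no decay whatsoever and the bootstrap never starts. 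Even in the range $p<2+\sqrt{3}$, the later steps (the passage from weighted $L^{2}$ decay to $L^\infty$ decay by parabolic smoothing, the claim that the exponents ``line up'' to reach $2\gamma+a$, the weighted Hardy estimate near $s=0$) are only sketched and would each require substantial justification; the paper's direct computation of $dK/ds$ sidesteps all of this.
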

\begin{proof} We define the function $K(s)$ as follows
\[K(s)=2p\int_{{\mathbb{R}^{N}}}v^{p-1}v_{s}^{2}\rho\; dy.\]
 It is clear that $K(s)$ is nonnegative.  To see {\it 2}, we
 compute the derivative of the function $K(s)$ and write it conveniently.
\begin{eqnarray}\label{desigK}
\frac{dK(s)}{ds} &=&
2p\int_{{\mathbb{R}^{N}}}[(p-1)\;v^{p-2}v_{s}^{3}+2\;
v^{p-1}v_{s}v_{ss}]\rho\; dy\nonumber\\ &=&
\left(1+\frac{2}{p-1}\right)K(s)+2p\;(3p-1)\int_{{\mathbb{R}^{N}}}v^{2p-2}v_{s}^{2}\;\rho\;
dy\nonumber\\
& & -2p\;(p-1)\int_{{\mathbb{R}^{N}}}v^{p-2}v_{s}^{2}Lv\;\rho\;
dy-4p\int_{{\mathbb{R}^{N}}}v^{p-1}v_{s}Lv_{s}\;\rho\; dy.
\end{eqnarray}
 To get a bound of the second of the four terms of (\ref{desigK}), we use the inequality
(\ref{deswangv}) in order to obtain that
\begin{equation}\label{2term}
2p\;(3p-1)\int_{{\mathbb{R}^{N}}}v^{2p-2}v_{s}^{2}\;\rho\; dy\leq
\frac{3p-1}{(\lambda^{1-p}-1)(p-1)}\;\frac{e^{s}}{e^{s}-1}\;K(s).\end{equation}
For the bound of the third term, we apply first the lemmas
\ref{desiginteg1} and \ref{Gn} in order to deduce that
$vLv-\frac{N}{2}\;v^{2}\geq 0$ a.e., recalling that
$v^{p-3}v_{s}^{2}\geq 0$ we conclude
\begin{eqnarray*}
\int_{{\mathbb{R}^{N}}}(vLv-\frac{N}{2}v^{2})\;v^{p-3}v_{s}^{2}\;\rho\;
dy \geq  0\;.
\end{eqnarray*}This inequality quickly leads to a bound of the third term of
(\ref{desigK}) which is a multiple of $K(s)$.
\begin{eqnarray}\label{3term}
2p\;(p-1)\int_{{\mathbb{R}^{N}}}v^{p-2}v_{s}^{2}Lv\;\rho\; dy
&\geq & 2p\;(p-1)\;\frac{N}{2}
\int_{{\mathbb{R}^{N}}}v^{p-1}v_{s}^{2}\;\rho\; dy\nonumber\\
&=& (p-1)\;\frac{N}{2}\;K(s).\end{eqnarray}For the last term of
(\ref{desigK}), we use first that
$v_{s}Lv_{s}-\frac{N}{2}\;v^{2}_{s}\geq 0$ a.e. (it's owing to the
lemmas \ref{desiginteg1} and \ref{Gn}) and that also $v^{p-1}\geq
0$. We obtain
\begin{eqnarray*}
\int_{{\mathbb{R}^{N}}}(v_{s}Lv_{s}-v_{s}^{2})\;v^{p-1}\;\rho\;
dy\geq 0\;.
\end{eqnarray*}
This inequality permits to bound the last term of (\ref{desigK})
by a multiple of $K(s)$ as follows
\begin{eqnarray}\label{4term}
4p\int_{{\mathbb{R}^{N}}}v^{p-1}v_{s}Lv_{s}\;\rho\; dy &\geq&
2p\;N\int_{{\mathbb{R}^{N}}}v^{p-1}v_{s}^{2}\;\rho\;
dy\nonumber\\&=& N\; K(s).\end{eqnarray} Then, taking into account
 (\ref{2term}),(\ref{3term}) and (\ref{4term}), we have that
\begin{eqnarray*}\frac{dK(s)}{ds}\leq
&-&\left(N-\frac{2}{p-1}\right)K(s)+\left(1-(p-1)\;\frac{N}{2}\right)K(s)
\\&+&\frac{3p-1}{(\lambda^{1-p}-1)(p-1)}\;\frac{e^{s}}{e^{s}-1}\;K(s)\\\quad
 &=& -2\gamma \;K(s)-\mu(s)\;K(s).\end{eqnarray*}
where
\begin{eqnarray}\label{mu}\mu(s)&=&-1+(p-1)\;\frac{N}{2}-
\frac{(3p-1)}{(\lambda^{1-p}-1)(p-1)}\;\frac{e^{s}}{e^{s}-1}\nonumber\\
&=&(p-1)\;\gamma-\frac{(3p-1)}{(\lambda^{1-p}-1)(p-1)}\;f(s),\end{eqnarray}
with $f(s)=\frac{e^{s}}{e^{s}-1}$. Now, we must  prove that
$\mu(s)>0$ for $s\geq s_{1}$, where $s_{1}$ is a time which
depends on $p$, $N$ and $\lambda$. It is clear that, owing to
(\ref{mu}), it is equivalent to require that for $f(s)< B$, where
$B$ is the constant defined by
\[B=\frac{\gamma\;(p-1)^{2}\;(\lambda^{1-p}-1)}{3p-1}.\]
We can take $s_{1}=f^{-1}(\frac{1+B}{2})$ in order to get
$\mu(s)>0$ for $s\geq s_{1}$, since $B>1$ due to the hypotheses of
the lemma. Then, a bound of the derivative of $K(s)$ is obtained
from
\[\frac{dK(s)}{ds}\leq -(2\gamma + a)\; K(s),\quad s\geq s_{1},\]
where $a$ is a positive constant (we can take, for example,
$a=\frac{\mu(s_{1})}{2}$). Thus, from a certain $s_{1}> 0$, the
function $K(s)$ has exponential decay rate. That is,
\begin{equation}
K(s)\leq K(s_{1})\;e^{-(2\gamma+a)s}.
\end{equation}
This statement proves the second part of the lemma. The last part
 of the lemma can be deduced immediately
from the previous one.
\end{proof}

Applying the former lemma to (\ref{igdI}), we obtain the mentioned
decay for the entropy production as it will be proved in the next
theorem.

\begin{teo}\label{decaI}
Under the hypotheses of the lemma \ref{propk} and if, moreover,
$I(u_{0})<\infty$, then $I(v(s))$ has an exponential decay rate.
More precisely,
\[I(v(s))\leq \left[I(u_{0})+C\right]\;e^{-2\gamma s}.\]
\end{teo}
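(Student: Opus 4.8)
The plan is to integrate the differential inequality for the entropy production obtained by combining \eqref{igdI} with Lemma \ref{propk}. Recall that from \eqref{igdI} we have
\[
\frac{d}{ds}I(v(s)) = -2\gamma\, I(v(s)) - 2R(s),
\]
and Lemma \ref{propk} gives $R(s) \geq -\frac{1}{2}K(s)$ with $K(s) \geq 0$. Hence
\[
\frac{d}{ds}I(v(s)) \leq -2\gamma\, I(v(s)) + K(s).
\]
First I would multiply through by the integrating factor $e^{2\gamma s}$ to obtain
\[
\frac{d}{ds}\left(e^{2\gamma s} I(v(s))\right) \leq e^{2\gamma s} K(s).
\]

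Next I would integrate this inequality from $0$ to $s$. This yields
\[
e^{2\gamma s} I(v(s)) \leq I(v(0)) + \int_{0}^{s} e^{2\gamma\tau} K(\tau)\, d\tau \leq I(u_{0}) + \int_{0}^{\infty} e^{2\gamma\tau} K(\tau)\, d\tau,
\]
where I use $v(0) = u_{0}$ and the hypothesis $I(u_{0}) < \infty$. Now I invoke part {\it 3} of Lemma \ref{propk}, which asserts precisely that $\int_{0}^{\infty} e^{2\gamma\tau} K(\tau)\, d\tau \leq C$ for a suitable positive constant $C$. Substituting this bound and dividing by $e^{2\gamma s}$ gives
\[
I(v(s)) \leq \left[I(u_{0}) + C\right] e^{-2\gamma s},
\]
which is the claimed estimate.

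The only subtlety worth checking is the justification of differentiating $I(v(s))$ and of the integration step: one needs $I(v(s))$ to be absolutely continuous (or at least that the fundamental theorem of calculus applies), which follows from the regularity of $v$ as a global solution in $X$ together with the computation in \eqref{igdI} showing $\frac{d}{ds}I(v(s))$ exists and is locally integrable. I do not anticipate a genuine obstacle here, since all the hard analytic work—establishing that $R(s)$ is bounded below by $-\frac{1}{2}K(s)$ and that $K$ has the requisite nonnegativity, exponential decay, and weighted integrability—has already been carried out in Lemma \ref{propk}. The proof of the theorem is thus a short Gr\"onwall-type argument, and the main care needed is simply bookkeeping the constants so that the final constant $I(u_{0}) + C$ matches the statement.
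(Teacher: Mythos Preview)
Your proof is correct and follows essentially the same approach as the paper: multiply the differential identity \eqref{igdI} by the integrating factor $e^{2\gamma s}$, use $-2R(s)\le K(s)$ from Lemma~\ref{propk}, integrate from $0$ to $s$, and then invoke part~\textit{3} of Lemma~\ref{propk} to bound the resulting integral by $C$. The paper's argument is identical in substance, only more terse.
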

\begin{proof} From the expression (\ref{igdI}) and the lemma
\ref{propk}, we have
\[\frac {d}{ds}\left(e^{2\gamma s}\;I(v(s))\right) = -2\; e^{2\gamma s}\;R(s) \leq
e^{2\gamma s}\; K(s).\] \noindent Integrating between $0$ and $s$
we obtain
\begin{eqnarray*}
e^{2\gamma s}\;I(v(s)) -I(v(0)) & \leq & \int_{0}^{s}
e^{2\gamma\sigma}\;
K(\sigma)\; d\sigma\\
 & \leq & \int_{0}^{\infty}
e^{2\gamma\sigma}\; K(\sigma)\; d\sigma \leq C\\
\therefore \quad I(v(s)) & \leq & \left[I(u_{0})+C\right]\;
e^{-2\gamma s}.
\end{eqnarray*}
\end{proof}
Now, we are in a position to prove that the entropy functional
decays
 exponentially. It will be proved in the next theorem.
\begin{teo}\label{decaE} Under the hypotheses of the theorem
\ref{decaI}, the entropy functional $E(v(s))$ has exponential
decay, that is,\[E(v(s))\leq \ C\; e^{-2\gamma s},\quad\mbox{for
every}\;s\geq s_{1},\] for certain $s_{1}>0$ which depends on $p$,
$N$ and $\lambda$, and for certain positive constant $C$ that
depends on $I(u_{0})$, $p$, $N$ and $\lambda$.
  \end{teo}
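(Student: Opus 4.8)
The plan is to integrate the identity from Proposition \ref{E}, part \emph{2}, namely $\frac{d}{ds}E(v(s)) = -I(v(s))$, over the interval $[s,\infty)$, and use Theorem \ref{decaI} to control the integral. Concretely, I would first recall that $E(v(s)) \to M = 0$ as $s\to\infty$ (Proposition \ref{E}, part \emph{3}), so that for any $s$,
\[
E(v(s)) = E(v(s)) - \lim_{\sigma\to\infty} E(v(\sigma)) = \int_{s}^{\infty} I(v(\sigma))\, d\sigma,
\]
where the interchange is justified because $I(v(\sigma)) = \int_{\mathbb{R}^N} v_\sigma^2 \rho\, dy \geq 0$ and the improper integral converges (as the computation below shows).

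Next I would plug in the exponential bound $I(v(\sigma)) \leq \bigl[I(u_0)+C\bigr] e^{-2\gamma\sigma}$ from Theorem \ref{decaI}, valid for all $\sigma \geq 0$. Since $\gamma = \frac{N}{2} - \frac{1}{p-1} > 0$ by hypothesis, this gives, for every $s \geq 0$,
\[
0 \leq E(v(s)) \leq \bigl[I(u_0)+C\bigr] \int_{s}^{\infty} e^{-2\gamma\sigma}\, d\sigma = \frac{I(u_0)+C}{2\gamma}\, e^{-2\gamma s}.
\]
Setting $\widetilde{C} = \frac{I(u_0)+C}{2\gamma}$, which depends only on $I(u_0)$, $p$, $N$ and $\lambda$ (note that $C$ and $\gamma$ depend only on $p,N,\lambda$), we obtain $E(v(s)) \leq \widetilde{C}\, e^{-2\gamma s}$. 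In fact this argument yields the bound for all $s \geq 0$, which is stronger than the claimed range $s \geq s_1$; but stating it for $s \geq s_1$ is harmless, and one may prefer $s \geq s_1$ simply because $s_1$ (from Lemma \ref{propk}) is the threshold beyond which all the machinery — in particular the sign of $\mu(s)$ and hence the clean decay of $K(s)$ — is in force, so that the constants are the "natural" ones there.

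The only delicate point is justifying the representation $E(v(s)) = \int_s^\infty I(v(\sigma))\, d\sigma$: one must know both that the limit $M=0$ exists (already established in Proposition \ref{E}) and that $E(v(s))$ is absolutely continuous in $s$ with derivative $-I(v(s))$, so that the fundamental theorem of calculus applies on $[s,T]$ and then $T\to\infty$ using monotone convergence (the integrand is nonnegative) together with $E(v(T))\to 0$. Both ingredients are supplied by Proposition \ref{E}, so there is no real obstacle; the substance of the theorem has already been done in Theorem \ref{decaI}, and this final step is essentially a one-line integration. I would therefore present it compactly: state the integral identity, invoke Theorem \ref{decaI}, carry out the elementary integration, and read off the constant.
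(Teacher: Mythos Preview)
Your argument is correct, and it is genuinely simpler than the route the paper takes. The paper does not integrate $E'=-I$ directly; instead it rewrites the identity $\frac{d}{ds}E(v(s))=-I(v(s))$ through (\ref{igdI}) as
\[
\frac{dE(v(s))}{ds}=\frac{1}{2\gamma}\,\frac{dI(v(s))}{ds}+\frac{1}{\gamma}\,R(s)\ \geq\ \frac{1}{2\gamma}\,\frac{dI(v(s))}{ds}-\frac{1}{2\gamma}\,K(s),
\]
then integrates this from $s$ to $b$, invokes the explicit decay of $K$ from Lemma~\ref{propk} part~\emph{2} (which is where the restriction $s\ge s_1$ enters), lets $b\to\infty$ using $E(v(b))\to 0$ and $I(v(b))\to 0$, and finally appeals to Theorem~\ref{decaI}. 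This yields $E(v(s))\le \frac{1}{2\gamma}I(v(s))+\frac{K(s_1)}{2\gamma(2\gamma+a)}e^{-(2\gamma+a)s}$ and hence the stated bound.

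Your approach bypasses the second use of Lemma~\ref{propk} and the auxiliary function $K$ entirely: once Theorem~\ref{decaI} is in hand, the single integration $E(v(s))=\int_s^\infty I(v(\sigma))\,d\sigma$ suffices, and it even gives the estimate for all $s\ge 0$ rather than only $s\ge s_1$. The paper's detour buys nothing extra here; your shortcut is cleaner and uses strictly fewer ingredients.
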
\begin{proof} From part {\it2} of proposition \ref{E} and  expression
  (\ref{igdI}),
  we have that
\begin{eqnarray}\label{desE} \frac{dE(v(s))}{ds}& =
&\frac{1}{2\gamma}\;\frac{dI(v(s))}{ds}+\frac{1}{\gamma}\;R\nonumber\\
& \geq &\frac{1}{2\gamma}\;\frac{dI(v(s))}{ds} -
\frac{1}{2\gamma}\;K(s),
\end{eqnarray}
where the inequality in (\ref{desE}) is owing to the bound of
$R(s)$ from $K(s)$. Now, we integrate between $s$ and $b$ in
(\ref{desE}) and we use part {\it 2} of lemma \ref{propk} to
obtain
\begin{eqnarray*} E(v(b))&-&E(v(s)) \geq\\
&\geq&\frac{1}{2\gamma}\;\left[I(v(b))-I(v(s))\right]-\frac{1}{2\gamma}\left.\left(\frac{
K(s_{1})}{2\gamma+a}e^{-(2\gamma+a)\sigma}\right)\right|^{b}_{s},
\end{eqnarray*} for every $s\geq s_{1}$. Taking the limit for
$b\rightarrow\infty$ and using part {\it 3} of proposition
\ref{E}, we get
\begin{eqnarray*}
E(v(s))&\leq & \frac{1}{2\gamma} \;I(v(s))+\frac{
K(s_{1})}{2\gamma\;(2\gamma+a)}\;e^{-(2\gamma+a)s},
\end{eqnarray*} for every $s\geq s_{1}$. Therefore, owing to the theorem (\ref{decaI}),
  the announced decay for the entropy functional takes place
 \[E(v(s))\leq \ C \;e^{-2\gamma s},\quad\mbox{for every}\;s\geq s_{1}.\]
\end{proof}

%----------------------------------------------------------------------------------
\section{Asymptotic behavior of the solution}

To finish with the application of this method, we must obtain a
bound of the norm of the solution $v(s)$ in terms of the entropy
$E(v(s))$ and the entropy production $I(v(s))$. For this purpose
we define the following function $g(s)$
\begin{equation}\label{defg}
 g(s)=\frac{1}{2}\int_{\mathbb{R}^{N}}v^{2}(s) \;\rho\;
 dy\;.\end{equation}
The next lemma provides a bound of the function $g(s)$ in terms of
$E(v(s))$ and $I(v(s))$, that is, a bound of the norm of $v(s)$ in
the space $L^{2}_{\rho}$.

\begin{teo}\label{desigg} Under the hypotheses of theorem (\ref{decaI}) and if,
 moreover, $p>\tilde p\;$, where
$\tilde p$ is defined by
\[\tilde p = \left\{
\begin{array}{l}
\frac{N}{N-2}\;,\quad N=3\;,\\1+\frac{4}{N}\;,\quad  N\geq 4\; ,
\end{array}
\right. \] then
\begin{enumerate}
\item $\left(\frac{p-1}{2}\;N-2\right)g(s)\leq\;
\frac{1}{2}\;I(v(s))+(p+1)\;E(v(s))$ and \item $g(s) \leq \; C
\;e^{-2\gamma s}\quad\mbox{for all}\;\;s\geq s_{1}$,
 where
$s_{1}$ is a positive number which depends on $p$, $N$ and
$\lambda$, and $C$ is certain positive constant which depends on
$I(u_{0})$, $p$, $N$ and $\lambda$.
\end{enumerate}
\end{teo}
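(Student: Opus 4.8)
The plan is to prove part (1) by a direct energy computation and then obtain part (2) immediately by combining (1) with Theorems \ref{decaI} and \ref{decaE}.

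For (1), I differentiate $g(s)=\tfrac12\int_{\mathbb{R}^N}v^2\rho\,dy$ and use that $v$ solves (\ref{ecv}), written in the form $v_s=-Lv+\tfrac{v}{p-1}+v^p$. Since $L$ is self-adjoint with $(Lv,v)_{L^2_\rho}=\int_{\mathbb{R}^N}|\nabla v|^2\rho\,dy$, integration by parts gives
\[g'(s)=-\|\nabla v\|_{L^2_\rho}^2+\frac{1}{p-1}\,\|v\|_{L^2_\rho}^2+\int_{\mathbb{R}^N}v^{p+1}\rho\,dy,\]
all terms being finite since $v\in L^\infty$ forces $v^{p+1}\le\|v\|_\infty^{p-1}v^2$. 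The key step is then to use this identity to eliminate the nonlinear term $\int_{\mathbb{R}^N}v^{p+1}\rho\,dy$ from the definition of the entropy $E(v(s))$; a short computation collapses the result to the clean identity
\[(p+1)\,E(v(s))=\frac{p-1}{2}\,\|\nabla v\|_{L^2_\rho}^2-\frac12\,\|v\|_{L^2_\rho}^2-g'(s).\]

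From here (1) follows from two elementary inequalities. The Poincar\'e-type inequality (\ref{desigL}), written as $\|\nabla v\|_{L^2_\rho}^2\ge\tfrac N2\|v\|_{L^2_\rho}^2$, together with $\|v\|_{L^2_\rho}^2=2g(s)$, turns the identity above into $(p+1)E(v(s))\ge\big(\tfrac{(p-1)N}{2}-1\big)g(s)-g'(s)$. Next, Young's inequality applied to $g'(s)=\int_{\mathbb{R}^N}v\,v_s\,\rho\,dy$ gives $g'(s)\le g(s)+\tfrac12 I(v(s))$, hence $-g'(s)\ge-g(s)-\tfrac12 I(v(s))$; adding the two inequalities produces exactly $\big(\tfrac{p-1}{2}N-2\big)g(s)\le\tfrac12 I(v(s))+(p+1)E(v(s))$.

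For (2), note that $\tilde p=\max\{N/(N-2),\,1+4/N\}$, so the hypothesis $p>\tilde p$ simultaneously guarantees the standing Wang hypotheses needed by Theorem \ref{decaI} and the strict inequality $p>1+\tfrac4N$, which is precisely what makes the constant $\tfrac{p-1}{2}N-2$ positive (for $N=3$ the condition $p>3$ already forces $p>1+\tfrac43$). Dividing (1) by this positive constant and inserting the bounds $I(v(s))\le[I(u_0)+C]e^{-2\gamma s}$ from Theorem \ref{decaI} and $E(v(s))\le Ce^{-2\gamma s}$ for $s\ge s_1$ from Theorem \ref{decaE} gives $g(s)\le C'e^{-2\gamma s}$ for $s\ge s_1$. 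I do not anticipate a genuine obstacle: the only delicate points are justifying the integrations by parts (covered by $v\in X\subset H^1_\rho$ and $v\in L^\infty$, with (\ref{deswangv}) controlling $v$ near $s=0$) and tracking the sign of $\tfrac{p-1}{2}N-2$, which is the one place the extra hypothesis $p>\tilde p$ is used.
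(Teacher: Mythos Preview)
Your argument is correct and follows essentially the same route as the paper: the paper also computes $g'(s)$ in two ways (once as $\int v v_s\,\rho\,dy$ and bounded by Young's inequality, once by substituting the equation to obtain $g'(s)=-(p+1)E(v(s))+\tfrac{p-1}{2}(Lv,v)-g(s)$), then combines them with the spectral inequality (\ref{desigL}) to get part~1, and invokes Theorems~\ref{decaI} and~\ref{decaE} together with $p>1+\tfrac{4}{N}$ for part~2. Your identity $(p+1)E=\tfrac{p-1}{2}\|\nabla v\|_{L^2_\rho}^2-\tfrac12\|v\|_{L^2_\rho}^2-g'(s)$ is just the paper's equation~(\ref{derivg3}) rearranged, so the two proofs are the same in substance.
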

\begin{proof}  We observe that the derivative of the function $g(s)$ satisfies
\begin{eqnarray}
g'(s) &=& \int_{\mathbb{R}^{N}}v v_{s}\; \rho\; dy
\label{derivg1}\\&\leq & \int_{\mathbb{R}^{N}}\left[
\frac{1}{2}\;v^{2}+\frac{1}{2}\;v_{s}^{2}\right]\rho\; dy =
g(s)+\frac{1}{2}\;I(v(s)).\label{derivg2}
\end{eqnarray}
On the other hand, we can obtain another expression for $g'(s)$
replacing  $v_{s}$ in (\ref{derivg1}) according to the problem
(\ref{ecv}) and using the definitions of $E(v(s))$, $L$ and
$g(s)$, that is,
\begin{equation}\label{derivg3}g'(s)=-(p+1)\;E(v(s))+\frac{p-1}{2}\;(Lv,v)-g(s).
 \end{equation}
Owing to (\ref{desigL}), (\ref{derivg2}) and (\ref{derivg3}), we
have that
\[\left(\frac{p-1}{2}\;N-2\right)g(s)\leq
\frac{1}{2}\;I(v(s))+(p+1)\;E(v(s))\;.\]  As $p>\frac{4}{N}+1$ and
using the bounds for the entropy and entropy production in
theorems \ref{decaI} and \ref{decaE}, respectively, we get
\[g(s)\leq C\; e^{-2\gamma s}\quad\mbox{para}\;\;s\geq s_{1}\;,\]
where $C$ is a positive constant which depends on $N$, $p$,
$\lambda$ and $I(v(0))$.
\end{proof}
Thanks to  the definition (\ref{defg}), we can  deduce the decay
of the norm of $v(s)$ in the space $L^{2}_{\rho}$ which is the aim
in the following theorem.
 \begin{teo} Under the hypotheses of the theorem \ref{desigg},
  then
   \[\|v(s)\|_{L^{2}_{\rho}}\leq C \;e^{-\gamma s}\;,\;\;s\geq s_{1}\;,\]
 for  $s_{1}>0$ which depends on $p$, $N$ and
 $\lambda$, and for certain positive constant $C$ which depends on $I(u_{0})$, $p$, $N$ and
 $\lambda$.
 \end{teo}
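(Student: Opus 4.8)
The plan is to read off the desired bound directly from the preceding theorem, since the function $g(s)$ introduced in (\ref{defg}) is, up to a factor of $2$, precisely the square of the $L^{2}_{\rho}$-norm of $v(s)$. Concretely, from the definition
\[
g(s)=\frac{1}{2}\int_{\mathbb{R}^{N}}v^{2}(s)\,\rho\,dy=\frac{1}{2}\,\|v(s)\|_{L^{2}_{\rho}}^{2},
\]
so that $\|v(s)\|_{L^{2}_{\rho}}=\sqrt{2\,g(s)}$. All the analytic work has already been carried out in Theorem \ref{desigg}, whose hypotheses are exactly the ones assumed here, so there is nothing left to set up.

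First I would invoke part \textit{2} of Theorem \ref{desigg}, which guarantees the existence of a time $s_{1}>0$ (depending on $p$, $N$, $\lambda$) and a constant $C>0$ (depending on $I(u_{0})$, $p$, $N$, $\lambda$) such that $g(s)\leq C\,e^{-2\gamma s}$ for all $s\geq s_{1}$. Then I would substitute this into the identity above to get
\[
\|v(s)\|_{L^{2}_{\rho}}^{2}=2\,g(s)\leq 2C\,e^{-2\gamma s},\qquad s\geq s_{1},
\]
and finally take square roots, obtaining $\|v(s)\|_{L^{2}_{\rho}}\leq \sqrt{2C}\,e^{-\gamma s}$ for $s\geq s_{1}$. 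Renaming $\sqrt{2C}$ as $C$ (still a positive constant depending only on $I(u_{0})$, $p$, $N$, $\lambda$) yields the statement, with the same $s_{1}$ as in Theorem \ref{desigg}.

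There is essentially no obstacle here: the theorem is a direct corollary, and the only thing to be careful about is bookkeeping of the dependence of the constants and of $s_{1}$, which is inherited verbatim from Theorem \ref{desigg} (and ultimately from Lemma \ref{propk}). One might, if desired, add a closing remark translating this exponential-in-$s$ decay back to the original variables via the change of variables (\ref{cv}): since $s=\log(t+1)$, the factor $e^{-\gamma s}$ becomes $(t+1)^{-\gamma}$ with $\gamma=\frac{N}{2}-\frac{1}{p-1}$, and the rescaling $v(y,s)=(t+1)^{\frac{1}{p-1}}u(x,t)$ together with the parabolic scaling of the spatial variable produces the announced rate $\|u(t)\|_{L^{2}}\leq C\,(t+1)^{-N/4}$; but this lies outside the present statement and would belong to the subsequent discussion.
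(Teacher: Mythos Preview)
Your proposal is correct and matches the paper's approach exactly: the paper does not even write out a formal proof of this theorem, remarking only that ``thanks to the definition (\ref{defg}), we can deduce the decay of the norm of $v(s)$ in the space $L^{2}_{\rho}$,'' which is precisely the square-root step you spell out. Your parenthetical remark about translating back to $u$ via (\ref{cv}) is also exactly what the paper does in the subsequent Theorem \ref{deca-u}.
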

Now, we can use again the change of variables (\ref{cv}) in order
to obtain the decay of the solution $u(x,t)$ of (\ref{ec}),
getting the next result.
 \begin{teo}\label{deca-u} Under the hypotheses of the theorem \ref{desigg},
 then \[\|u(t)\|_{L^{2}}\leq C\;
(t+1)^{-\frac{N}{4}}\;,\;\; t\geq t_{1}\;,\]for $t_{1}>0$ which
depends on $p$, $N$ and
 $\lambda$, and for certain positive constant $C$ which depends on
 $I(u_{0})$, $p$, $N$ and
 $\lambda$.
 \end{teo}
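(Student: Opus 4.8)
The plan is to translate the already-established decay estimate in the self-similar variables back to the original variables by means of the change of variables (\ref{cv}). By Theorem \ref{desigg} (or its stated corollary in $L^2_\rho$), we know that $\|v(s)\|_{L^2_\rho}\leq C\,e^{-\gamma s}$ for $s\geq s_1$, where $\gamma=\frac{N}{2}-\frac{1}{p-1}$ and $\rho(y)=\exp(|y|^2/4)$.

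First I would record precisely how the weighted $L^2_\rho$-norm of $v$ relates to the plain $L^2(\mathbb{R}^N)$-norm of $u$. Starting from $v(y,s)=(t+1)^{1/(p-1)}u(x,t)$ with $x=(t+1)^{1/2}y$, substitute into $\int_{\mathbb{R}^N}v(y,s)^2\rho(y)\,dy$; the Jacobian of $y\mapsto x$ gives $dy=(t+1)^{-N/2}\,dx$, and discarding the (bounded below by $1$) Gaussian weight $\rho\geq 1$ yields
\[
\int_{\mathbb{R}^N}u(x,t)^2\,dx \;=\;(t+1)^{-\frac{2}{p-1}+\frac N2}\int_{\mathbb{R}^N}v(y,s)^2\rho(y)^{-1}\,dy\;\leq\;(t+1)^{\frac N2-\frac{2}{p-1}}\,\|v(s)\|_{L^2_\rho}^2 .
\]
Since $s=\log(t+1)$, the bound $\|v(s)\|_{L^2_\rho}^2\leq C^2 e^{-2\gamma s}=C^2(t+1)^{-2\gamma}=C^2(t+1)^{-N+\frac{2}{p-1}}$ holds once $s\geq s_1$, i.e. once $t\geq t_1:=e^{s_1}-1$. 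Multiplying the two powers of $(t+1)$ gives $\|u(t)\|_{L^2}^2\leq C^2(t+1)^{\frac N2-\frac{2}{p-1}-N+\frac{2}{p-1}}=C^2(t+1)^{-N/2}$, and taking square roots produces exactly $\|u(t)\|_{L^2}\leq C\,(t+1)^{-N/4}$ for $t\geq t_1$.

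The constants and time threshold simply inherit their dependence from Theorem \ref{desigg}: $t_1=e^{s_1}-1$ depends on $p$, $N$, $\lambda$, and the final constant depends additionally on $I(u_0)$. The only mildly delicate point is bookkeeping the exponents of $(t+1)$ in the change of variables — one must be careful that the factor coming from the amplitude $(t+1)^{1/(p-1)}$ enters squared and that the Jacobian contributes $(t+1)^{-N/2}$ — but once these are tracked correctly the two contributions combine cleanly, and the weight $\rho\geq1$ means we lose nothing (indeed the inequality goes in the favorable direction). I do not anticipate a genuine obstacle here; the substance of the argument is entirely contained in the preceding sections, and this theorem is a direct corollary obtained by undoing the self-similar rescaling.
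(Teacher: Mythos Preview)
Your approach is correct and is exactly what the paper does: the theorem is stated without a detailed proof, the authors simply say one reverts the change of variables (\ref{cv}) to pass from the bound $\|v(s)\|_{L^2_\rho}\leq C e^{-\gamma s}$ to the bound on $\|u(t)\|_{L^2}$, and your exponent bookkeeping is right. One small slip: in your displayed identity the factor $\rho(y)^{-1}$ should not be there---the plain change of variables gives $\int u^2\,dx=(t+1)^{N/2-2/(p-1)}\int v^2\,dy$, and then $\int v^2\,dy\leq \int v^2\rho\,dy$ because $\rho\geq 1$; your subsequent inequality and conclusion are unaffected.
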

\begin{rem}Let us observe that, because of the Wang's theorem and the theorem
\ref{deca-u}, we obtain the decay of the norm $\|u(t)\|_{L^{q}}$
for $q\geq 2$. This is,
\begin{eqnarray}\label{deca-u-q}
\|u(t)\|_{L^{q}}\sim
t^{-\frac{1}{p-1}-\frac{2}{q}\left(\frac{N}{4}-\frac{1}{p-1}\right)},
\end{eqnarray}
for $t\geq t_{1}$.
\end{rem}
\begin{rem}
Notice that, although the decay of the norm  $\|u\|_{L^{q}}\sim
t^{-\frac{N}{2}\left(1-\frac{1}{q}\right)}$ obtained by Kawanago
in \cite{Kawanago} is better than (\ref{deca-u-q}) in the case
$q>2$ and the same for $q=2$, the result obtained in
\cite{Kawanago} is true for $\frac{N+2}{N}<p<\frac{N+2}{N-2}$ and
the result obtained in this work corresponds to the range
$p>\tilde p\;$.
\end{rem}

\end{document}